\documentclass[12pt]{amsart}
\usepackage{amsfonts}
\usepackage{amscd}
\usepackage{amsmath, mathrsfs, amssymb, mathtools}
\usepackage{amsthm}
\usepackage{color}
\usepackage{setspace}
\usepackage[colorlinks,linkcolor=blue,citecolor=purple]{hyperref}
\usepackage{epsfig}
\usepackage{here}
\usepackage{graphicx}
\usepackage[all]{xy}
\usepackage{psfrag}
\usepackage{graphicx,transparent}
\usepackage{enumerate}
\usepackage{caption}
\usepackage{tikz-cd}
\usepackage[textheight=8.75in, textwidth=6.75in]{geometry}
\usepackage[numbers,sort&compress]{natbib}
\theoremstyle{plain}
\newtheorem{theorem}{Theorem}[section]
\newtheorem{lemma}[theorem]{Lemma}

\newtheorem{conjecture}[theorem]{Conjecture}

\theoremstyle{definition}
\newtheorem{remark}{Remark}

\usepackage[obeyFinal,textsize=tiny,shadow,loadshadowlibrary]{todonotes}
\usepackage{soul}
\usepackage[normalem]{ulem}
\newcommand{\added}[1]{\ifmmode{\color{red}#1}\else{\color{red}\uline{#1}}\fi}

\begin{document}

\title{Parity of $k$-differentials in genus zero and one}

\date{\today}

\author{Dawei Chen}
\address{Department of Mathematics, Boston College, Chestnut Hill, MA 02467}
\email{dawei.chen@bc.edu}

\address{Axiom Math, 124 University Avenue, Palo Alto, CA 94301}

\author{Evan Chen}
\email{evan@axiommath.ai}

\author{Kenny Lau}
\email{kenny@axiommath.ai}

\author{Ken Ono}
\email{ken@axiommath.ai}

\author{Jujian Zhang}
\email{jujian@axiommath.ai}

\subjclass[2020]{Primary: 11A07; Secondary: 14H10, 32G15}

\thanks{Research of D.C.\ is supported in part by the National Science Foundation
    under Grant DMS-2301030 and by Simons Travel Support for Mathematicians.}

\begin{abstract}
    Here we completely determine the spin parity of $k$-differentials with
    prescribed zero and pole orders on Riemann surfaces of genus zero and one.
    This result was previously obtained conditionally by the first author and
    Quentin Gendron assuming the truth of a number-theoretic hypothesis \cite[Conjecture A.10]{CG22}.
    We prove this hypothesis by reformulating it in terms of Jacobi symbols,
    reducing the proof to a combinatorial identity and standard facts about Jacobi symbols.
    The proof was obtained by AxiomProver and the system formalized the proof
    of the combinatorial identity in Lean/Mathlib (see the Appendix).
    We emphasize that it is this combinatorial identity, and not the
    geometric results on $k$-differentials, that was formalized. The formalized
    statement is displayed and explained in the Appendix.
\end{abstract}

\maketitle

\section{Introduction and Statement of Results}
\label{sec:intro}

A (meromorphic) $k$-differential $\xi$ is a section of the
$k$\textsuperscript{th} power of the canonical bundle on a Riemann surface $X$ of genus $g$.
For a tuple of integers $\mu = (m_1,\ldots, m_n)$ where
\[ m_1+\cdots +m_n = k(2g-2), \]
we let $\Omega^k\mathcal M_g(\mu)$ be the moduli space of (primitive) $k$-differentials
$(X,\xi)$ with zero and pole orders given by $\mu$.
A $k$-differential induces a flat metric with conical singularities on $X$,
where the cone angles are multiples of $2\pi / k$, determined by the orders at the zeros and poles.
From this perspective, the study of $k$-differentials of a prescribed
type plays an important role in surface dynamics, moduli theory, and combinatorial enumeration.
We refer the reader to \cite{Zo06, Wr15, Ch17, BCGGMk} for an introduction to this fascinating subject.

Although $\Omega^k\mathcal M_g(\mu)$ is a complex orbifold, it can be disconnected for special $\mu$.
The classification of connected components of $\Omega^k\mathcal M_g(\mu)$ has
been completed for holomorphic differentials ($k=1$ and $m_i \geq 0$ for all $i$, \cite{KZ03});
meromorphic differentials ($k=1$ and some $m_i < 0$, \cite{Bo15});
quadratic differentials of finite area ($k=2$ and $m_i \geq -1$ for all $i$, \cite{La04H, La04S, La08, CM14});
and quadratic differentials of infinite area ($k=2$ and some $m_i < -1$, \cite{CG22}).
However, for $k\geq 3$, the classification of connected components of
$\Omega^k\mathcal M_g(\mu)$ remains largely unknown.

In the above results, besides the (easy-to-understand) hyperelliptic structure
and certain ad hoc structures occurring in low genus (e.g., the rotation number in genus one),
the only other known invariant that can help distinguish connected components of
$\Omega^k\mathcal M_g(\mu)$ is the spin parity.

For $k = 1$, let $\omega$ be a differential one-form whose zero and pole orders are even.
Then the half-canonical divisor ${\rm div}(\omega)/2$ defines a theta-characteristic, whose spin parity
\[ \dim H^0\bigl(X, {\rm div}(\omega)/2\bigr) \pmod{2} \]
is deformation invariant (see~\cite{At71, Mu71}).
This parity coincides with the Arf invariant defined by the flat surface
structure of $(X, \omega)$ (see~\cite{Jo80} and \cite[Section 3]{KZ03}).

Given a $k$-differential $(X, \xi)$ parameterized by $\Omega^k\mathcal M_g(\mu)$ with $k > 1$,
there exists a canonical cyclic cover $\pi\colon \widehat{X}\to X$ of degree $k$
such that $\pi^{*}\xi = \widehat{\omega}^k$,
where $\widehat{\omega}$ is a differential one-form on $\widehat{X}$ whose zero
and pole orders are uniquely determined by $\mu$ (see~\cite[Section 2.1]{BCGGMk}).
If the zero and pole orders of $\widehat{\omega}$ are even,
we say that $\Omega^k\mathcal M_g(\mu)$ is of parity type.
Consequently, we use the spin parity of $(\widehat{X}, \widehat{\omega})$ to
define the spin parity of $(X, \xi)$ in $\Omega^k\mathcal M_g(\mu)$.

It is thus natural to ask how to determine the spin parity for $k$-differentials of parity type.
In contrast to the case $k=1$, when $k$ is even,
all $k$-differentials in $\Omega^k\mathcal M_g(\mu)$ of parity type have the same spin parity,
which can be determined explicitly from $\mu$ (see \cite[Theorem 1.2]{La04S} for
$k=2$ and \cite[Section 5.2]{CG22} for general even $k$). However, this is not the case when $k$ is odd.
Indeed, even if $X$ has low genus,
the domain $\widehat{X}$ of the canonical cyclic cover can have high genus,
which makes the computation of spin parity nontrivial.
Moreover, if $k$ is odd, then $\Omega^k\mathcal M_g(\mu)$ is of parity type if
and only if all entries of $\mu$ are even (see \cite[Proposition 5.1]{CG22}).

In this context, for odd $k$,
the spin parity of $k$-differentials in genus zero and one was studied explicitly in \cite[Appendix]{CG22}.
After a series of delicate geometric arguments and based on numerical evidence,
the entire study eventually reduced to a number-theoretic conjecture (see also
\cite[Conjecture A.8 and Remark A.9]{CG22} for another equivalent form of the conjecture).

Throughout, for an odd integer $k \geq 3$ and \emph{any} integer $n$,
we let $N_k(n)$ be the number of pairs of positive integers $(b_1, b_2)$
such that $b_1, b_2 \leq (k-1)/2$, $b_1 + b_2 \geq (k+1)/2$, and
$b_2 \equiv n b_1 \pmod{k}$. This definition places no coprimality
constraint on $n$. The hypothesis $\gcd(n,k) = \gcd(n+1,k) = 1$ is imposed only
where it is needed, as in the conjecture below.

\begin{conjecture}[{\cite[Conjecture A.10]{CG22}}]
\label{conj}
Let $k \geq 3$ be an odd integer, and let $n$ be an integer with
$\gcd (n, k) = \gcd (n+1, k) = 1$. Then we have
\[ N_k(n) \equiv \left\lfloor\frac{k+1}{4}\right\rfloor \pmod{2}. \]
\end{conjecture}

In this paper, we prove this conjecture.

\begin{theorem}\label{thm:conj} Conjecture~\ref{conj} is true.
\end{theorem}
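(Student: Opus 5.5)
The plan is to recast $N_k(n)$ in terms of Gauss-lemma counts, which compute Jacobi symbols, and to isolate the remaining contribution as a purely combinatorial quantity. The target is to show
\[
(-1)^{N_k(n)} = \left(\frac{2}{k}\right),
\]
where $(\cdot/k)$ is the Jacobi symbol. This suffices: for odd $k$, $\lfloor (k+1)/4\rfloor$ is even exactly when $k\equiv \pm1 \pmod 8$, which is exactly when $(2/k)=1$.

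Set $h=(k-1)/2$, and for $x\in\mathbb Z$ let $\overline{x}\in\{0,\dots,k-1\}$ denote its residue mod $k$. The condition $b_2\equiv nb_1$ with $1\le b_2\le h$ determines $b_2=\overline{nb_1}$ from $b_1$. Moreover $2\le b_1+b_2\le k-1$ and $b_1+b_2\equiv (n+1)b_1 \pmod k$, so $b_1+b_2=\overline{(n+1)b_1}$. Hence
\[
N_k(n)=\#\{\,b\in[1,h] : \overline{nb}\le h,\ \overline{(n+1)b}>h\,\}.
\]
Write $S_m=\{b\in[1,h]:\overline{mb}\le h\}$. Gauss's lemma, which holds for Jacobi symbols since $\gcd(m,k)=1$, gives $(m/k)=(-1)^{h-|S_m|}$. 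We will apply it with $m=n$ and $m=n+1$; these are exactly where the hypotheses $\gcd(n,k)=\gcd(n+1,k)=1$ enter.

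Next, split $[1,h]$ according to membership in $S_n$. For $b\in S_n$ we have $\overline{nb}=b_2\le h$, and $b\in S_{n+1}$ iff $b+b_2\le h$. For $b\notin S_n$, write $nb\equiv -c$ with $c\in[1,h]$. Then $(n+1)b\equiv b-c$ with $|b-c|<h$, and $b\ne c$ since $\gcd(n+1,k)=1$. So $b\notin S_{n+1}$ iff $b<c$. This yields
\[
h-|S_{n+1}| = N_k(n) + I, \qquad I=\#\{\,b\notin S_n : b<c(b)\,\},
\]
and therefore
\[
(-1)^{N_k(n)}=\left(\frac{n+1}{k}\right)(-1)^{I}.
\]
The map $b\mapsto c(b)$ is a bijection from $[1,h]\setminus S_n$ onto $S_{-n^{-1}}$. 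Using the analogous decomposition on the $S_n$ side, $I$ should be expressible through an inversion-type count comparing $b$ and $\overline{\pm nb}$. Such inversion counts are classically tied to Zolotarev's lemma, which gives the sign of multiplication by $m$ on $\mathbb Z/k$ as $(m/k)$. The plan is to combine this with the Gauss-lemma counts for $n$, $n+1$ and $2$, together with $(-1/k)=(-1)^h$, so that all $n$-dependent symbols cancel and only $(2/k)$ survives.

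The main obstacle is the combinatorial identity expressing $(-1)^I$, or a closely related inversion count, as $\bigl(\tfrac{n+1}{k}\bigr)\bigl(\tfrac{2}{k}\bigr)$, equivalently as a product of Gauss/Zolotarev counts. My first attempt will be to pair $b$ with $b_2$ (and with $c$). That pairing gives a fixed-point-free involution on most of the configurations, leaving a residual set whose size is the Gauss count $h-|S_2|$ for $m=2$. If this fails, I would instead prove the identity by an explicit parity computation of lattice-point counts under the line $y=nx/k$, in the style of Eisenstein's proof of reciprocity. That approach reduces everything to sums of the form $\sum_{b\le h}\lfloor mb/k\rfloor \bmod 2$.
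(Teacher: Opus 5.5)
Your reduction is correct as far as it goes. The description $N_k(n)=\#\{b\in[1,h]:\overline{nb}\le h<\overline{(n+1)b}\}$ is right, the case analysis for $b\notin S_n$ is right (including $b\neq c$ from $\gcd(n+1,k)=1$), and so is the identity $(-1)^{N_k(n)}=\bigl(\tfrac{n+1}{k}\bigr)(-1)^{I}$. But because that identity is exact, the statement you call the main obstacle, $(-1)^{I}=\bigl(\tfrac{n+1}{k}\bigr)\bigl(\tfrac{2}{k}\bigr)$, is equivalent to the theorem itself. $I$ is just another count of the same kind as $N_k(n)$, so the parity question is untouched. Neither proposed route is carried out. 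The involution pairing $b$ with $b_2$ and $c$ is never specified, and nothing supports the claim about its residual set. If it is meant to act on the set counted by $I$, it cannot work: for $k=7$, $n=4$ one finds $I=1$ while $h-|S_2|=2$. The Zolotarev remark is likewise only a hope, not an argument.

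The missing idea is to read your condition $b>c(b)$ as a floor increment. For $b\notin S_n$ you have $\overline{nb}=k-c$, so $b>c$ iff $\overline{nb}+b\ge k$, i.e.\ iff $\lfloor (n+1)b/k\rfloor-\lfloor nb/k\rfloor=1$. For $b\in S_n$ this difference is $0$, since $\overline{nb}+b\le 2h<k$. Set $T(a)=\sum_{b=1}^{h}\lfloor ab/k\rfloor$ and $G(a)=h-|S_a|$. Then $I=G(n)-T(n+1)+T(n)$, hence
\[
N_k(n)=\bigl(T(n+1)+G(n+1)\bigr)-\bigl(T(n)+G(n)\bigr),
\]
which is exactly Lemma~\ref{lem:N_as_floor_diff} with $F_k(a)=T(a)+G(a)=\sum_{i=1}^{h}\lfloor (ai+h)/k\rfloor$. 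The parity then follows from Eisenstein's argument. Sum $ab=k\lfloor ab/k\rfloor+\overline{ab}$ over $b\in[1,h]$, and use that the folded residues $\min(\overline{ab},k-\overline{ab})$ permute $[1,h]$. This gives $T(a)\equiv G(a)+(a-1)\tfrac{k^2-1}{8}\pmod 2$ for every $a$ prime to $k$. Applying it to $a=n$ and $a=n+1$ yields $N_k(n)\equiv(2n-1)\tfrac{k^2-1}{8}\equiv\tfrac{k^2-1}{8}\pmod 2$. The factor $\bigl(\tfrac{2}{k}\bigr)$ comes from whichever of $n,n+1$ is even. That is why the paper treats even $a$ separately in Lemma~\ref{lem:Fk}: Eisenstein's lemma in the form $\bigl(\tfrac{a}{k}\bigr)=(-1)^{T(a)}$ needs $a$ odd. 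Your fallback of reducing to sums $\sum_{b\le h}\lfloor mb/k\rfloor$ points in this direction. Without identifying $I$ (equivalently $N_k(n)$) as a difference of such floor sums, though, it remains a plan rather than a proof.
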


Consequently, the results in \cite[Appendix]{CG22} that were originally stated
conditionally on Conjecture~\ref{conj} become unconditional,
thereby completely determining the spin parity of $k$-differentials in genus zero and one for all $k$.

In order to state these results, we recall the $q$-adic valuation $\nu_q(m)$
for a prime $q$ and an integer $m$,
which is given by the largest exponent $\nu$ such that $q^{\nu}$ divides $m$.
Additionally, for odd $k$, consider the prime factorization
$k = p_1^{h_1}\cdots p_s^{h_s}q_1^{\ell_1}\cdots q_t^{\ell_t}$,
where each $p_i$ is an odd prime such that $\lfloor (p_i+1)/4 \rfloor$ is even
and each $q_i$ is an odd prime such that $\lfloor (q_i+1)/4 \rfloor$ is odd.
For the collection of primes $\mathcal{Q} = \{ q_1, \ldots, q_t \}$ in the factorization of $k$,
define \[ \nu_{\mathcal{Q}}(m) \coloneq \sum_{i=1}^t \min\{\nu_{q_i}(m), \nu_{q_i}(k)\}. \]
Finally, for $\mu = (m_1, \ldots, m_n)$, we define
\begin{equation}
\label{eq:n_k}
    n_k(\mu) \coloneq \# \left \{ i :
    \nu_{\mathcal{Q}}(m_i) \not \equiv \nu_{\mathcal{Q}}(k) \pmod{2}\right \}
\end{equation}
(see \eqref{eq:n_k_Jacobi} for an equivalent definition using Jacobi symbols).
We also denote by $2\mu$ the tuple $(2m_1,\ldots, 2m_n)$.

Since we prove Conjecture~\ref{conj} here, the following two theorems from \cite{CG22} are now unconditional.

\begin{theorem}[{\cite[Theorem A.16]{CG22}}]
\label{thm:zero}
For genus zero and odd $k$, the spin parity of $\Omega^k\mathcal M_0(2\mu)$ is
determined by $n_k(\mu) \pmod{2}$.
\end{theorem}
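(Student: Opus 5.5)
The plan is to deduce Theorem~\ref{thm:zero} directly from \cite[Appendix]{CG22} together with Theorem~\ref{thm:conj}, rather than redoing the geometry. In \cite[Appendix]{CG22}, the spin parity of $\Omega^k\mathcal M_0(2\mu)$ for odd $k$ is computed by passing to the canonical cyclic cover $\widehat X\to\mathbb P^1$, on which $\pi^*\xi=\widehat\omega^k$. The first step is to recall how that argument runs. The parity of $(\widehat X,\widehat\omega)$ is computed by degenerating to a multi-scale differential, or equivalently by an Arf-invariant computation on the flat structure. This splits it into local contributions, one for each singularity of order $2m_i$. Each contribution is expressed as the parity of a lattice-point count of the form $N_{k'}(n)$, where $k'=k/\gcd(m_i,k)$ and $n$ is determined by the residue of $m_i$ modulo $k'$. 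The only unproved input in that computation was the evaluation $N_{k'}(n)\equiv\lfloor (k'+1)/4\rfloor \pmod 2$ for $\gcd(n,k')=\gcd(n+1,k')=1$, which is Conjecture~\ref{conj} (or its equivalent forms \cite[Conjecture A.8, Remark A.9]{CG22}). Theorem~\ref{thm:conj} supplies exactly this.

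The second step is to check that every invocation of the conjecture in \cite[Appendix]{CG22} really lies in the range $\gcd(n,k')=\gcd(n+1,k')=1$ with $k'\ge 3$ odd. The case $k'=1$ contributes trivially, and the non-coprime situations are handled there by replacing $k$ with the reduced modulus $k'$. Once this is confirmed, the conditional Theorem~A.16 of \cite{CG22} becomes unconditional.

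The third step is to match the resulting formula with $n_k(\mu)$ as defined in \eqref{eq:n_k}. Summing the local contributions gives a parity of the form $\sum_i \lfloor (k_i'+1)/4\rfloor$ plus a term depending only on $k$ and $n$. Here we use that $\lfloor (p+1)/4\rfloor$ is odd exactly when $p\equiv 3,5 \pmod 8$, that is, when $\left(\frac{2}{p}\right)=-1$. Consequently $\lfloor(k'+1)/4\rfloor \bmod 2$ is multiplicative in the sense of the Jacobi symbol $\left(\frac{2}{k'}\right)$. It then follows that $(-1)^{\lfloor (k_i'+1)/4\rfloor}$ equals $(-1)^{\nu_{\mathcal Q}(k)-\nu_{\mathcal Q}(m_i)}$, because $k_i'=k/\gcd(m_i,k)$ removes exactly $\min\{\nu_q(m_i),\nu_q(k)\}$ factors of each $q$. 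Therefore the total parity depends only on $n_k(\mu)\bmod 2$, which is the form stated in Theorem~\ref{thm:zero}.

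The main obstacle I expect is bookkeeping rather than new mathematics. One must verify that the reduction in \cite{CG22} uses the conjecture only in its hypothesised range, and that the Jacobi-symbol identity converts their stated parity into the $\nu_{\mathcal Q}$ expression \eqref{eq:n_k} without sign errors. The sign errors I would guard against are those coming from $\left(\frac{2}{\cdot}\right)$ at primes dividing $m_i$ to high order.
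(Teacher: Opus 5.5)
Your proposal is correct and matches the paper. Theorem~\ref{thm:zero} is \cite[Theorem A.16]{CG22}, which was proved there conditionally on exactly Conjecture~\ref{conj} (coprimality hypotheses included) and already stated in terms of the $n_k(\mu)$ of \eqref{eq:n_k}, so Theorem~\ref{thm:conj} alone makes it unconditional. Consequently your second and third steps are redundant, and your description of the inner workings of \cite{CG22} (per-singularity contributions with modulus $k/\gcd(m_i,k)$) is an unverified reconstruction that the deduction does not need.
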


\begin{theorem}[{\cite[Theorem A.21]{CG22}}]
\label{thm:one}
For genus one and odd $k$, the spin parity of the connected component
$\Omega^k\mathcal M_1(2\mu)^d$ of rotation number $d$ is determined by $n_k(\mu) + d + 1 \pmod{2}$.
\end{theorem}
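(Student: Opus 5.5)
The plan is to deduce the statement directly from the conditional result in \cite[Appendix]{CG22} together with Theorem~\ref{thm:conj}. The argument in \cite{CG22} establishing \cite[Theorem A.21]{CG22} uses a sequence of geometric reductions, which are unconditional. First, one degenerates a genus-one $k$-differential of rotation number $d$ in $\Omega^k\mathcal M_1(2\mu)^d$ to a boundary configuration built from genus-zero pieces. Second, one tracks how the spin parity of the canonical cyclic cover $(\widehat{X},\widehat{\omega})$ changes under this degeneration. The contribution of the rotation number appears as the additive term $d+1$. The only non-geometric input is a parity count of certain pairs of residues modulo $k$, and that count is exactly $N_k(n)$ for values of $n$ with $\gcd(n,k)=\gcd(n+1,k)=1$. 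So I would first re-read that argument and isolate every place where \cite[Conjecture A.10]{CG22} (or its equivalent form \cite[Conjecture A.8]{CG22}) is invoked. I would then check that at each such place the hypotheses of Conjecture~\ref{conj} hold, namely that $k$ is odd and $n$ satisfies the two coprimality conditions.

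Second, I would substitute Theorem~\ref{thm:conj} at each of these invocations. This gives $N_k(n)\equiv\lfloor (k+1)/4\rfloor \pmod 2$ unconditionally, so the chain of equalities in \cite{CG22} expressing the spin parity as a sum of local contributions becomes unconditional. Those local contributions are indexed by the entries $m_i$ of $\mu$ together with the rotation number.

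The remaining point is bookkeeping: I must verify that the quantity $n_k(\mu)$ defined in \eqref{eq:n_k} agrees with the quantity used in \cite{CG22}. I would do this prime by prime via multiplicativity. Write $k=\prod p_i^{h_i}\prod q_j^{\ell_j}$. The local parity contribution of an entry $m_i$ factors over the prime powers dividing $k$. For a prime $p_i$ with $\lfloor (p_i+1)/4\rfloor$ even, the contribution is trivial. For a prime $q_j$ with $\lfloor (q_j+1)/4\rfloor$ odd, the contribution flips according to the parity of $\min\{\nu_{q_j}(m_i),\nu_{q_j}(k)\}$. Summing over $j$ recovers the condition $\nu_{\mathcal Q}(m_i)\not\equiv \nu_{\mathcal Q}(k)\pmod 2$. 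This is also the content of the Jacobi-symbol reformulation \eqref{eq:n_k_Jacobi}, which I would cite once it is established later in the paper.

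The main obstacle is not new mathematics but this faithful matching of conventions. Specifically, I must confirm three things:
\begin{enumerate}[(i)]
\item the residues $n$ arising in the degeneration argument of \cite{CG22} satisfy the coprimality hypotheses;
\item the normalization of the rotation number $d$ matches, so that the term is $d+1$;
\item the two descriptions of $n_k(\mu)$ coincide.
\end{enumerate}
Once these are checked, the statement follows immediately by combining \cite[Theorem A.21]{CG22} with Theorem~\ref{thm:conj}, exactly parallel to the genus-zero case, Theorem~\ref{thm:zero}.
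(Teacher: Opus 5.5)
Your proposal follows the paper's argument: Theorem~\ref{thm:one} is \cite[Theorem A.21]{CG22}, whose only conditional input was Conjecture~\ref{conj}, so proving Theorem~\ref{thm:conj} makes it unconditional. Your checks (i)--(iii) are reasonable due diligence that the paper leaves implicit, since it takes both the hypotheses and the definition of $n_k(\mu)$ directly from \cite{CG22}; your sketch of the degeneration argument inside \cite{CG22} is speculative but not needed for the deduction.
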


The rest of the paper is structured as follows.
In Section~\ref{sec:proof} we prove Theorem~\ref{thm:conj}.
The key observation that had not been noticed earlier is that the parity
condition in Conjecture~\ref{conj} can be reformulated in terms of Jacobi symbols,
which are standard generalizations of Legendre symbols.
Combined with some elementary number-theoretic arguments, we obtain the proof of Theorem~\ref{thm:conj}.
This key observation, as well as the formal proof of the main
lemma, Lemma~\ref{lem:N_as_floor_diff}, was autonomously discovered by AxiomProver.
In the Appendix, we discuss the AI architecture and provide the Lean verification artifacts.
This section is modular and may be omitted by readers primarily interested in the number-theoretic results.

\subsection*{Acknowledgements}
This work was initiated during the $2026$ Joint Mathematics Meetings,
where the first author brought Conjecture~\ref{conj} to the attention of the fourth author.
The authors thank the conference organizers for creating a wonderful, interactive environment. The authors thank the referee for helpful comments.

This paper describes a test case for AxiomProver,
an autonomous system that is currently under development.
The project engineering team is
Chris Cummins,
GSM,
Dejan Grubisic,
Leopold Haller,
Letong Hong (principal investigator),
Andranik Kurghinyan,
Kenny Lau,
Hugh Leather,
Aram Markosyan,
Manooshree Patel,
Gaurang Pendharkar,
Vedant Rathi,
Alex Schneidman,
Volker Seeker,
Shubho Sengupta (principal investigator),
Ishan Sinha,
Jimmy Xin,
and Jujian Zhang.

\section{Proof of Conjecture~\ref{conj}}\label{sec:proof}

The proof of Conjecture~\ref{conj} is based on the observation that
$\lfloor\frac{k+1}{4}\rfloor\pmod 2$ is periodic modulo $8$,
and can be reformulated in terms of the Jacobi symbol $\Big(\frac{2}{k}\Big).$

\subsection{Preliminaries about Jacobi symbols}

We recall three standard formulas (for example, see \cite{Jenkins1867,
Tan00}) for the number-theoretic Jacobi symbol $\big(\frac{a}{k}\big)$ (where $k$ is odd and $\gcd(a,k)=1$).

\begin{lemma}[Eisenstein {\cite[p.~131]{Tan00}}]\label{lem:eisenstein}
Let $a$ and $k$ be odd positive integers with $\gcd(a,k)=1$. If $m \coloneq (k-1)/2,$ then
\[
\left(\frac{a}{k}\right) = (-1)^{\sum_{i=1}^{m}\left\lfloor \frac{ai}{k}\right\rfloor }.
\]
\end{lemma}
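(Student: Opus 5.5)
The plan is the classical Gauss-lemma route: first prove Gauss's lemma for Jacobi symbols, then rewrite the count of ``large'' residues as a floor sum via Eisenstein's trick of pairing with the odd multiplier $a+k$. For odd prime $k$ this is Eisenstein's lemma for the Legendre symbol. For composite odd $k$ I would extend it multiplicatively, but indirectly, through Gauss's lemma.

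\textbf{Step 1 (Gauss's lemma for Jacobi symbols).} For $\gcd(a,k)=1$ and $m=(k-1)/2$, let $\mu(a,k)$ be the number of $i\in\{1,\dots,m\}$ whose least positive residue of $ai$ mod $k$ exceeds $k/2$. I will show that $\left(\frac{a}{k}\right)=(-1)^{\mu(a,k)}$. The quantity $(-1)^{\mu(a,k)}$ is the sign of the permutation-with-signs that $x\mapsto ax$ induces on the half system $\{\pm1,\dots,\pm m\}$ modulo $\pm$. There are two ways to get the identity from this.
\begin{itemize}
\item Zolotarev's viewpoint: the sign of multiplication by $a$ on $\mathbb Z/k$ is multiplicative in $k$ via CRT. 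This is messy.
\item Direct verification (preferred): $(-1)^{\mu(a,k)}$ depends only on $a \bmod k$ and is multiplicative in $a$, since the signs compose. It also equals the Legendre symbol when $k$ is prime. The remaining task is multiplicativity in $k$.
\end{itemize}
To get multiplicativity in $k$, I would avoid Gauss altogether and prove Eisenstein's formula by the standard pairing argument given in Steps 2 and 3.

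\textbf{Step 2 (pairing argument).} Write $ai=k\lfloor ai/k\rfloor+r_i$ for $1\le i\le m$. The residues $r_i$ that are $\le m$ and the values $k-r_i$ for $r_i>m$ together form a permutation of $\{1,\dots,m\}$. Summing over $i$ gives
\[
a\sum_{i=1}^m i = k\sum_{i=1}^m\left\lfloor \frac{ai}{k}\right\rfloor + \sum_{i=1}^m i - \mu k + 2\sum_{r_i>m} r_i .
\]
Now reduce mod $2$, using that $a$ and $k$ are odd. The factor $(a-1)\sum i$ is even, so
\[
\sum_{i=1}^m\left\lfloor \frac{ai}{k}\right\rfloor\equiv \mu(a,k)\pmod 2 .
\]
This is exactly where the oddness of $a$ enters.

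\textbf{Step 3 (combining).} It remains to show $(-1)^{\mu(a,k)}=\left(\frac{a}{k}\right)$ for composite $k$. This is the main obstacle. I would use Jacobi reciprocity as an assumed standard fact, alongside the paper's citation \cite{Tan00}. Let $S(a,k)=\sum_{i\le m}\lfloor ai/k\rfloor$. Counting lattice points in the rectangle $(0,k/2)\times(0,a/2)$ gives
\[
S(a,k)+S(k,a)=\frac{a-1}{2}\cdot\frac{k-1}{2}.
\]
Two further properties hold:
\begin{itemize}
\item $S(a+2k,k)\equiv S(a,k) \pmod 2$, since the shift adds $2\sum i$;
\item $S(1,k)=0$.
\end{itemize}
So $(-1)^{S(a,k)}$ satisfies the same reciprocity law, periodicity, and normalization as the Jacobi symbol on odd coprime pairs. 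A Euclidean-type descent then proves equality by induction on $a+k$. Given $a<k$, choose the odd representative $a'\equiv k \pmod{2a}$ with $0<a'<2a$, and swap using reciprocity. The delicate point is keeping both arguments odd throughout the descent, which the shift by $2k$ (rather than $k$) ensures. This step also makes Step 1 unnecessary.
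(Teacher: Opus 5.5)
The paper does not prove this lemma; it quotes it as a standard fact from \cite{Tan00}. Your Step~2 is correct and works verbatim for composite $k$: it shows $S(a,k)\equiv\mu(a,k)\pmod 2$ for odd $a$. The gap is in Step~3, on which your argument actually rests.

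The only moves you allow are:
\begin{itemize}
\item the swap $(a,k)\leftrightarrow(k,a)$ via reciprocity;
\item replacing the first entry by its residue modulo twice the second.
\end{itemize}
If $a<k<2a$, the representative $a'\equiv k\pmod{2a}$ with $0<a'<2a$ is $a'=k$ itself. You arrive at $(k,a)$ with the same sum, and the only available moves lead back to $(a,k)$. For example, $(3,5)\to(5,3)\to(3,5)\to\cdots$.

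This is not a bookkeeping slip: reciprocity, $2k$-periodicity and $S(1,k)=0$ do not determine the function. Consider the moves $a\mapsto a\pm 2k$ and $k\mapsto k\pm 2a$ (staying positive). Under them, every odd coprime pair descends along a unique path to a pair with $k/2<a<2k$. So the pairs fall into trees rooted at such pairs, and all pairs with an entry $1$ lie in the tree of $(1,1)$. Now reverse the sign of $(-1)^{S(a,k)}$ on the two trees rooted at $(3,5)$ and $(5,3)$, which the swap interchanges. This preserves all three of your properties but changes the value at $(3,5)$. So no descent that uses only these properties can succeed.

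What is missing is a rule for $a\mapsto -a$. Since $ai/k\notin\mathbb Z$ for $1\le i\le m$, one has $\lfloor (2k-a)i/k\rfloor=2i-1-\lfloor ai/k\rfloor$. Hence $S(2k-a,k)\equiv S(a,k)+\frac{k-1}{2}\pmod 2$. This matches $\left(\frac{2k-a}{k}\right)=\left(\frac{-1}{k}\right)\left(\frac{a}{k}\right)=(-1)^{(k-1)/2}\left(\frac{a}{k}\right)$, the first supplementary law, which you would assume along with reciprocity.

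With this reflection, replace $a'$ by $2a-a'$ whenever $a'>a$. For $a>1$ the new pair $(a'',a)$ then has $a''<a<k$, so the sum strictly decreases. The induction terminates at a pair with first entry $1$.

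Alternatively, drop Step~3 altogether. Your Step~2 combined with Gauss's lemma for Jacobi symbols (the paper's Lemma~\ref{lem:gauss_schering}) gives the statement immediately. As written, however, you have discarded that route, so the proof is incomplete.
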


\begin{remark}
Eisenstein's lemma is often stated in the following ``counting'' form. Let
$m=(k-1)/2$, and for $1\le i\le m$ let $r_i\in\{1,2,\dots,k-1\}$ denote the least
positive residue of $ai \pmod{k}$. Then
\[
\left(\frac{a}{k}\right)=(-1)^{\,\#\{\,1\le i\le m:\ r_i>m\,\}}.
\]
Equivalently, writing the even residue set as
$E_k=\{2,4,\dots,k-1\}=\{2i:1\le i\le m\}$, one has
\[
\left(\frac{a}{k}\right)=(-1)^{\sum_{i=1}^{m}\left\lfloor \frac{2ai}{k}\right\rfloor},
\]
since $ai=q_i k+r_i$ implies
$\left\lfloor \frac{2ai}{k}\right\rfloor=2q_i+\mathbf 1_{r_i>m}$, so the parity of
$\sum_{i=1}^m \big\lfloor \tfrac{2ai}{k}\big\rfloor$ is exactly the number of residues
$r_i$ lying in the ``upper half'' $\{m+1,\dots,k-1\}$. We use the floor-sum
formulation stated in Lemma~\ref{lem:eisenstein}, which is an equivalent form of Eisenstein's lemma
for odd $a$ and $k$.
\end{remark}

The next lemma gives another formulation for these Jacobi symbols,
concerning the location of least residues.

\begin{lemma}[Gauss--Schering {\cite{Jenkins1867}}]\label{lem:gauss_schering}
Let $a$ and $k$ be positive integers with $k$ odd and $\gcd(a,k) = 1$.
Let $r_k(s)$ denote the least nonnegative residue of $s$ modulo $k$ and let
\[
    H_k=\{\,j: 1\le j\le (k-1)/2\,\} \quad \text{ and }\quad
    W_{a,k}=\{\,r_k(a j): j\in H_k\,\}.
\]
If we define
\[
m(a,k) \coloneq \big|\{\,w\in W_{a,k} : w\notin H_k\,\}\big|,
\]
then we have
\[
\Big(\frac{a}{k}\Big) = (-1)^{m(a,k)}.
\]
\end{lemma}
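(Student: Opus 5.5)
The plan is to reduce Lemma~\ref{lem:gauss_schering} to Eisenstein's formula (Lemma~\ref{lem:eisenstein}) by a parity computation on residues. This is the classical passage between Gauss's lemma and Eisenstein's lemma, redone for composite odd $k$.

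First, we may assume $a$ is odd. Both sides of the claimed identity depend only on $a \bmod k$. The Jacobi symbol satisfies $\big(\frac{a}{k}\big)=\big(\frac{a+k}{k}\big)$. Also $r_k(aj)=r_k((a+k)j)$, so $W_{a,k}=W_{a+k,k}$ and hence $m(a,k)=m(a+k,k)$. Since $k$ is odd, replacing $a$ by $a+k$ when $a$ is even makes $a$ odd and positive, with $\gcd(a,k)=1$ still holding. This puts us in the setting of Lemma~\ref{lem:eisenstein}.

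Next comes the key combinatorial observation, where coprimality is used. Set $m=(k-1)/2$ and $r_j=r_k(aj)$ for $j\in H_k$. Define $s_j=r_j$ if $r_j\le m$, and $s_j=k-r_j$ if $r_j>m$. Each $s_j$ lies in $H_k$, because $r_j\neq 0$ by $\gcd(a,k)=1$. The $s_j$ are pairwise distinct: if $s_i=s_j$ then $ai\equiv \pm aj \pmod k$, so $i\equiv \pm j\pmod k$ because $a$ is invertible mod $k$. For $i,j\in H_k$ this forces $i=j$, since $i+j\le k-1$. Hence $j\mapsto s_j$ is a permutation of $H_k$. This step, which uses only invertibility of $a$ modulo $k$ and not primality, is the only real content, so I expect it to be the main point to get right.

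Finally, we compare sums modulo $2$. Put $S=\sum_{j=1}^m j$. On one hand,
\[
aS=\sum_{j=1}^m aj = k\sum_{j=1}^m \Big\lfloor \frac{aj}{k}\Big\rfloor + \sum_{j=1}^m r_j .
\]
On the other hand,
\[
\sum_j r_j=\sum_j s_j+\sum_{r_j>m}(2r_j-k)\equiv S + m(a,k)\pmod 2,
\]
using that $\sum_j s_j=S$ and that $k$ is odd. Since $a$ and $k$ are odd, reducing the first identity mod $2$ gives
\[
S\equiv \sum_j\lfloor aj/k\rfloor + S + m(a,k)\pmod 2,
\]
that is, $m(a,k)\equiv \sum_{j=1}^m\lfloor aj/k\rfloor \pmod 2$. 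Lemma~\ref{lem:eisenstein} then yields $\big(\frac{a}{k}\big)=(-1)^{m(a,k)}$, as claimed.
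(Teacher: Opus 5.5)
Your argument is correct, but it takes a different route from the paper. The paper gives no proof of Lemma~\ref{lem:gauss_schering}; it simply quotes the lemma from Jenkins as an independent classical fact about the Jacobi symbol.

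You instead derive it from Lemma~\ref{lem:eisenstein}. You first make the harmless reduction $a\mapsto a+k$ and then run the classical parity computation. One small detail: equating your count over $j$ with the count over the set $W_{a,k}$ needs the $r_j$ to be distinct, which follows from your injectivity step.

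What your route buys is that the two cited lemmas are interchangeable, so the paper really needs only one of them. What it does not do is touch the genuinely nontrivial content for composite $k$: that a single residue count computes $\prod_{p\mid k}\big(\frac{a}{p}\big)^{\nu_p(k)}$. That content is moved wholesale into Lemma~\ref{lem:eisenstein}. In many treatments the floor-sum formula is itself obtained from Gauss's lemma by exactly your computation. So as a self-contained proof yours is non-circular only if Lemma~\ref{lem:eisenstein} for composite $k$ is established by other means.

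Your middle step is actually worth more than you claim. The congruence $m(a,k)\equiv\sum_{j=1}^m\lfloor aj/k\rfloor \pmod 2$ for odd $a$ coprime to $k$ uses no Jacobi symbols at all. Together with $m(a,k)=m(a+k,k)$, it is precisely what the proof of Lemma~\ref{lem:Fk} extracts from Lemmas~\ref{lem:eisenstein} and~\ref{lem:gauss_schering}, in both the odd and the even case. Your computation therefore makes Lemmas~\ref{lem:eisenstein}--\ref{lem:supp2} unnecessary for Theorem~\ref{thm:conj}. Beyond elementary floor-function identities, the only remaining input is the direct check that $(k^2-1)/8\equiv\lfloor (k+1)/4\rfloor \pmod 2$.
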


Finally, we recall the simple well-known closed formula for $\Big(\frac{2}{k}\Big).$

\begin{lemma}[Supplementary law for $\big(\frac{2}{k}\big)$ {\cite[p.~131]{Tan00}}]\label{lem:supp2}
Let $k$ be an odd positive integer. Then
\[
\Big(\frac{2}{k}\Big)=(-1)^{(k^2-1)/8}=
\begin{cases}
+1,& k\equiv \pm 1\pmod 8,\\
-1,& k\equiv \pm 3\pmod 8.
\end{cases}
\]
\end{lemma}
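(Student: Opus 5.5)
We will deduce the supplementary law directly from the Gauss--Schering formulation, Lemma~\ref{lem:gauss_schering}, applied with $a=2$. This is legitimate for every odd positive $k$ because $\gcd(2,k)=1$. The plan is to compute the count $m(2,k)$ explicitly, reduce its parity to a function of $k \bmod 8$, and then compare that function with the parity of $(k^2-1)/8$.

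First, set $m=(k-1)/2$, so $H_k=\{1,\dots,m\}$. For $j\in H_k$ we have $2j\le k-1<k$, so $r_k(2j)=2j$ and no reduction occurs. Hence
\[
W_{2,k}=\{2,4,\dots,k-1\}=\{2j:1\le j\le m\}.
\]
The quantity $m(2,k)$ is therefore the number of $j$ with $1\le j\le m$ and $2j>m$, that is, with $\lfloor m/2\rfloor<j\le m$. This gives
\[
m(2,k)=m-\lfloor m/2\rfloor=\lceil m/2\rceil .
\]
Lemma~\ref{lem:gauss_schering} then yields $\big(\frac{2}{k}\big)=(-1)^{\lceil (k-1)/4\rceil}$.

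Second, we run through the four residues of $k$ modulo $8$, writing $k=8t+r$ with $r\in\{1,3,5,7\}$:
\begin{itemize}
\item $r=1$: $m=4t$ and $\lceil m/2\rceil=2t$, which is even.
\item $r=3$: $m=4t+1$ and $\lceil m/2\rceil=2t+1$, which is odd.
\item $r=5$: $m=4t+2$ and $\lceil m/2\rceil=2t+1$, which is odd.
\item $r=7$: $m=4t+3$ and $\lceil m/2\rceil=2t+2$, which is even.
\end{itemize}
This establishes the case description: the symbol is $+1$ for $k\equiv\pm1\pmod 8$ and $-1$ for $k\equiv\pm3\pmod 8$.

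Finally, we check the closed form. With $k=8t+r$,
\[
\frac{k^2-1}{8}=8t^2+2tr+\frac{r^2-1}{8}.
\]
The first two terms are even, since $2tr$ is even. So the parity of $(k^2-1)/8$ equals the parity of $(r^2-1)/8$, which takes the values $0,1,3,6$ for $r=1,3,5,7$, respectively. This matches the pattern even, odd, odd, even found above, so $(-1)^{(k^2-1)/8}$ agrees with the case description.

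No step is a genuine obstacle. The only point needing care is the observation that $2j<k$ for all $j\in H_k$, so that the residues $r_k(2j)$ are just the even numbers $2j$. Once that is noted, the rest is a finite check modulo $8$.
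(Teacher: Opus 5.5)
Your proof is correct, including the degenerate case $k=1$ (where $H_k=\emptyset$). The paper does not prove this lemma: it quotes it from \cite[p.~131]{Tan00} as one of three standard inputs, alongside Lemmas~\ref{lem:eisenstein} and~\ref{lem:gauss_schering}. You instead derive it as a corollary of Lemma~\ref{lem:gauss_schering} with $a=2$, which is the classical textbook route. This brings the cited prerequisites down to two.

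Your count also gives more than a parity. Since $(k-1)/4\in\tfrac12\mathbb{Z}$, you have $m(2,k)=\lceil (k-1)/4\rceil=\lfloor (k+1)/4\rfloor$ exactly. Hence $\big(\frac{2}{k}\big)=(-1)^{\lfloor (k+1)/4\rfloor}$ outright, which is precisely the reformulation announced at the start of Section~\ref{sec:proof}.

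The closed form can likewise be obtained without casework, using the trick from the even case of Lemma~\ref{lem:Fk}. Apply Lemma~\ref{lem:eisenstein} to the odd number $k+2$, which is coprime to $k$. Since $\lfloor (k+2)i/k\rfloor=i$ for $1\le i\le m$, this gives $\big(\frac{2}{k}\big)=(-1)^{m(m+1)/2}=(-1)^{(k^2-1)/8}$. Combining the two identities yields $(k^2-1)/8\equiv\lfloor (k+1)/4\rfloor \pmod 2$ with no mod-$8$ check, and this could replace the ``direct check'' at the end of the proof of Lemma~\ref{lem:Fk}. Your mod-$8$ table remains the quickest way to the explicit case description.
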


Lemmas~\ref{lem:eisenstein}, \ref{lem:gauss_schering}, and \ref{lem:supp2}
reduce the conjecture to the study of
a combinatorial sum $F_k(a),$ which we now define.
If $k \ge 3$ is an odd integer, then let $m \coloneq (k-1)/2$. For any integer $a$, we define the function
\begin{equation}
F_k(a) \coloneq \sum_{i=1}^{m} \left\lfloor \frac{ai+m}{k} \right\rfloor.
\label{eq:F}
\end{equation}
The lemmas above imply the following claim.

\begin{lemma}\label{lem:Fk}
Let $k \ge 3$ be odd.
For any positive integer $a$ with $\gcd(a,k)=1$, we have
\[
F_k(a)\equiv \begin{cases} 0\pmod 2 &{\text {if $a$ is odd}},\\
\lfloor \frac{k+1}{4}\rfloor\pmod 2 &{\text {if $a$ is even}}.
\end{cases}
\]
\end{lemma}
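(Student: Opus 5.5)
The plan is to split each summand of $F_k(a)$ into a quotient part and an indicator part. Then Lemma~\ref{lem:gauss_schering} and Lemma~\ref{lem:eisenstein} each produce a copy of the same Jacobi symbol $\big(\frac{a}{k}\big)$, and the two copies cancel modulo $2$.

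\textbf{Step 1: split the summands.} For $1\le i\le m$ write $ai=q_ik+r_i$ with $0\le r_i\le k-1$. Since $\gcd(a,k)=1$ and $k\nmid i$, we have $r_i\neq 0$. Because $0\le r_i+m\le 2k-2$, we get
\[
\left\lfloor \frac{ai+m}{k}\right\rfloor = q_i+\mathbf 1_{r_i+m\ge k}=q_i+\mathbf 1_{r_i>m},
\]
where the last equality uses $k-m=m+1$. Summing over $i$ gives
\[
F_k(a)=\sum_{i=1}^m\left\lfloor\frac{ai}{k}\right\rfloor+\#\{1\le i\le m: r_i>m\}.
\]
The residues $r_i=r_k(ai)$ are exactly the elements of $W_{a,k}$, so the second term is $m(a,k)$. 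By Lemma~\ref{lem:gauss_schering}, it has parity $(-1)^{m(a,k)}=\big(\frac{a}{k}\big)$.

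\textbf{Step 2: $a$ odd.} Lemma~\ref{lem:eisenstein} applies directly, so the first sum also has parity given by $\big(\frac{a}{k}\big)$. The two contributions agree modulo $2$, hence $F_k(a)\equiv 0\pmod 2$.

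\textbf{Step 3: $a$ even.} Replace $a$ by $a+k$, which is odd and positive with $\gcd(a+k,k)=1$. Since $\lfloor (a+k)i/k\rfloor=\lfloor ai/k\rfloor+i$,
\[
\sum_{i=1}^m\left\lfloor\frac{ai}{k}\right\rfloor=\sum_{i=1}^m\left\lfloor\frac{(a+k)i}{k}\right\rfloor-\frac{m(m+1)}{2}.
\]
Apply Lemma~\ref{lem:eisenstein} to $a+k$. Since $\big(\frac{a+k}{k}\big)=\big(\frac{a}{k}\big)$ by periodicity of the Jacobi symbol in the numerator, the first sum on the right has the same parity as $m(a,k)$. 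The two copies cancel again, leaving
\[
F_k(a)\equiv \frac{m(m+1)}{2}\pmod 2 .
\]

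\textbf{Step 4: identify the constant.} It remains to check that $\frac{m(m+1)}{2}\equiv\lfloor\frac{k+1}{4}\rfloor \pmod 2$. Both sides depend only on $k \bmod 8$, that is, on $m\bmod 4$, so it suffices to check $k=1,3,5,7$:
\begin{itemize}
\item $k=1$: the values are $0$ and $0$;
\item $k=3$: the values are $1$ and $1$;
\item $k=5$: the values are $3$ and $1$;
\item $k=7$: the values are $6$ and $2$.
\end{itemize}
Equivalently, both sides are odd exactly when $k\equiv\pm3\pmod 8$, matching Lemma~\ref{lem:supp2}.

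The only delicate point is Step 1. One must confirm that the threshold $r_i+m\ge k$ is exactly the Gauss--Schering condition $r_i\notin H_k$, which relies on $r_i\neq0$. One must also keep track of the hypothesis in Lemma~\ref{lem:eisenstein} that $a$ be odd, which is precisely why the shift to $a+k$ is needed in the even case.
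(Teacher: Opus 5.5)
Your proposal is correct and follows the paper's proof. It splits $\lfloor (ai+m)/k\rfloor = q_i+\mathbf{1}_{\{r_i>m\}}$ to get $F_k(a)=\sum_{i=1}^m\lfloor ai/k\rfloor+m(a,k)$, cancels the Eisenstein and Gauss--Schering parities when $a$ is odd, shifts to $a+k$ when $a$ is even to pick up $m(m+1)/2=(k^2-1)/8$, and finishes with a check modulo $8$. The one point you and the paper both leave implicit is that the $r_i$ are pairwise distinct (since $a$ is invertible modulo $k$ and $1\le i\le m<k$); this is what makes the count over indices $i$ equal the count over the set $W_{a,k}$.
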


\begin{proof}
Let $k\ge 3$ be odd and set $m=(k-1)/2$.
For each $1\le i\le m$, write
\[
ai=kq_i+r_i,\qquad q_i=\left\lfloor \frac{ai}{k}\right\rfloor,\qquad 1\le r_i\le k-1,
\]
where $r_i=r_k(ai)$ is the least positive residue (note $r_i\neq 0$ since $\gcd(a,k)=1$ and $1\le i<k$).
Then we have
\[
\left\lfloor \frac{ai+m}{k}\right\rfloor
=\left\lfloor \frac{kq_i+r_i+m}{k}\right\rfloor
=q_i+\left\lfloor \frac{r_i+m}{k}\right\rfloor
=q_i+\mathbf{1}_{\{r_i>m\}},
\]
because $0<r_i<k$ and $r_i+m\ge k$ if and only if $r_i\ge m+1$. Summing the preceding identity over $i=1,\dots,m$ gives
\begin{equation}\label{eq:Fa_split}
F_k(a)=\sum_{i=1}^{m}\left\lfloor \frac{ai}{k}\right\rfloor+\#\{\,1\le i\le m:\ r_k(ai)>m\,\}.
\end{equation}
If we let
\[
S_k(a) \coloneq \sum_{i=1}^{m}\left\lfloor \frac{ai}{k}\right\rfloor,
\]
then the second term in \eqref{eq:Fa_split} is exactly $m(a,k)$ from
Lemma~\ref{lem:gauss_schering} (it counts those residues $r_k(ai)$ which lie outside $H_k=\{1,\dots,m\}$).
Hence, we have
\begin{equation}\label{eq:Fa_S_mu}
F_k(a)=S_k(a)+m(a,k).
\end{equation}

\medskip
\noindent\emph{Case where $a$ is odd.}
By Lemma~\ref{lem:eisenstein}, $\big(\frac{a}{k}\big)=(-1)^{S_k(a)}$, and by Lemma~\ref{lem:gauss_schering},
$\big(\frac{a}{k}\big)=(-1)^{m(a,k)}$.
Therefore, we have $S_k(a)\equiv m(a,k)\pmod 2$, and from \eqref{eq:Fa_S_mu} we obtain
\[
F_k(a)\equiv S_k(a)+m(a,k)\equiv 0\pmod 2.
\]

\medskip
\noindent\emph{Case where $a$ is even.}
Then $a+k$ is odd and $\big(\frac{a+k}{k}\big)=\big(\frac{a}{k}\big)$.
Applying Lemma~\ref{lem:eisenstein} to $a+k$ yields
\[
\Big(\frac{a}{k}\Big)=\Big(\frac{a+k}{k}\Big)
=(-1)^{\sum_{i=1}^{m}\left\lfloor \frac{(a+k)i}{k}\right\rfloor }.
\]
But for each $i$ we have
\[
\left\lfloor \frac{(a+k)i}{k}\right\rfloor
=\left\lfloor \frac{ai}{k}+i\right\rfloor
=\left\lfloor \frac{ai}{k}\right\rfloor+i,
\]
and so we obtain
\[
\sum_{i=1}^{m}\left\lfloor \frac{(a+k)i}{k}\right\rfloor
=S_k(a)+\sum_{i=1}^{m} i.
\]
On the other hand, Lemma~\ref{lem:gauss_schering} gives $\big(\frac{a}{k}\big)=(-1)^{m(a,k)}$.
Therefore, we have
\[
S_k(a)+\sum_{i=1}^{m} i \equiv m(a,k)\pmod 2.
\]
Using \eqref{eq:Fa_S_mu}, we conclude
\[
F_k(a)=S_k(a)+m(a,k)\equiv \sum_{i=1}^{m} i \pmod 2.
\]
Since $\sum_{i=1}^{m} i = m(m+1)/2 = (k^2-1)/8$, we have
\begin{equation}\label{eq:Fa_even}
F_k(a)\equiv \frac{k^2-1}{8}\pmod 2.
\end{equation}
Finally, because $k$ is odd, $k\equiv 1,3,5,7\pmod 8$.
By the supplementary law (Lemma~\ref{lem:supp2}), the exponent
$\frac{k^2-1}{8}$ satisfies $\big(\frac{2}{k}\big)=(-1)^{(k^2-1)/8}$, so its parity
is $0$ when $k\equiv\pm1\pmod 8$ and $1$ when $k\equiv\pm3\pmod 8$.
A direct check gives
\[
\frac{k^2-1}{8}\equiv
\begin{cases}
0 & (k\equiv 1,7\!\!\!\pmod 8),\\
1 & (k\equiv 3,5\!\!\!\pmod 8)
\end{cases}
\qquad\text{and}\qquad
\left\lfloor \frac{k+1}{4}\right\rfloor\equiv
\begin{cases}
0 & (k\equiv 1,7\!\!\!\pmod 8),\\
1 & (k\equiv 3,5\!\!\!\pmod 8)
\end{cases}
\]
modulo $2$.
Thus $\frac{k^2-1}{8}\equiv \left\lfloor\frac{k+1}{4}\right\rfloor\pmod 2$, and \eqref{eq:Fa_even} implies
\[
F_k(a)\equiv \left\lfloor \frac{k+1}{4}\right\rfloor \pmod 2.
\]
This completes the proof.
\end{proof}

\subsection{Proof of Theorem~\ref{thm:conj}}

Thanks to Lemma~\ref{lem:Fk}, Theorem~\ref{thm:conj} follows from the following combinatorial identity.

\begin{lemma}[Formula for $N_k(n)$]\label{lem:N_as_floor_diff}
If  $k$ is odd, $m=(k-1)/2$, and $n$ satisfies $\gcd(n,k)=1$, then
we have
\[
N_k(n)=F_k(n+1)-F_k(n).
\]
\end{lemma}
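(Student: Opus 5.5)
The plan is to compute the difference termwise. For each $1\le i\le m$, write $r_i = r_k(ni)$, the least nonnegative residue. Since $\gcd(n,k)=1$ and $1\le i<k$, we have $1\le r_i\le k-1$. The proof of Lemma~\ref{lem:Fk} gives
\[
\left\lfloor \frac{ni+m}{k}\right\rfloor=\left\lfloor\frac{ni}{k}\right\rfloor+\mathbf 1_{\{r_i>m\}}
\]
with no coprimality assumption on $a$ beyond $r_i\neq 0$. For $n+1$, we instead write
\[
(n+1)i+m = k\left\lfloor \tfrac{ni}{k}\right\rfloor + r_i + i + m .
\]
Since $1\le r_i+i+m\le (k-1)+m+m<2k$, this gives
\[
\left\lfloor \frac{(n+1)i+m}{k}\right\rfloor=\left\lfloor\frac{ni}{k}\right\rfloor+\mathbf 1_{\{r_i+i\ge k-m\}}=\left\lfloor\frac{ni}{k}\right\rfloor+\mathbf 1_{\{r_i+i\ge m+1\}}.
\]
Subtracting and summing over $i$ should yield
\[
F_k(n+1)-F_k(n)=\sum_{i=1}^m\Bigl(\mathbf 1_{\{r_i+i\ge m+1\}}-\mathbf 1_{\{r_i\ge m+1\}}\Bigr).
\]
Each summand lies in $\{0,1\}$: if $r_i\ge m+1$ then also $r_i+i\ge m+1$. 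So the difference equals the number of $i\in\{1,\dots,m\}$ with $r_i\le m$ and $r_i+i\ge m+1=(k+1)/2$.

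Next I identify this count with $N_k(n)$. Set $b_1=i$ and $b_2=r_i$. When $r_i\le m$, the pair $(b_1,b_2)$ satisfies all the conditions: $1\le b_1,b_2\le m$, $b_2\equiv nb_1\pmod k$, and $b_1+b_2\ge (k+1)/2$. Conversely, take any pair $(b_1,b_2)$ counted by $N_k(n)$. Then $b_2$ is a positive integer at most $m<k$ congruent to $nb_1$, so $b_2=r_k(nb_1)$ and $b_2\le m$. Hence the map $i\mapsto (i,r_i)$ is a bijection between the counted set of $i$ and the set defining $N_k(n)$, which proves the lemma.

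The main obstacle is only bookkeeping in the floor computation for $n+1$. I have to check that $r_i+i+m$ never reaches $2k$, so that the floor picks up at most one extra unit, and that the threshold $k-m$ equals $m+1$. I also need the observation that $\gcd(n,k)=1$ ensures $r_i\neq0$. This is needed so that $b_2$ is positive, and so that the identity for $F_k(n)$ is valid. Negative $n$ causes no trouble, since floors and least residues behave the same way. The hypothesis $\gcd(n+1,k)=1$ is not needed for this identity; it is needed only when Lemma~\ref{lem:Fk} is applied afterwards.
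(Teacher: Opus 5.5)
Your proof is correct and follows essentially the same route as the paper: you reduce each summand to the residue $r=r_k(nb)$, use the bound $r+b+m\le 2k-2<2k$ to see that the floor difference is $\mathbf 1_{\{m+1-b\le r\le m\}}$, and then match $b_2=r$. One small point: you do not actually need $r_i\neq 0$, because when $r_i=0$ both of your indicators vanish (as $i\le m$) and no pair is counted, which is why the paper remarks that the coprimality hypothesis can be dropped.
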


\begin{proof}
Fix $b\in\{1,\dots,m\}$ and write
\[
nb=qk+r,\qquad 1\le r\le k-1,
\]
(which is possible since $\gcd(n,k)=1$).
Then we have
\[
\left\lfloor\frac{(n+1)b+m}{k}\right\rfloor-\left\lfloor\frac{nb+m}{k}\right\rfloor
=\left\lfloor\frac{r+m+b}{k}\right\rfloor-\left\lfloor\frac{r+m}{k}\right\rfloor.
\]
Set $x \coloneq r+m$. Since $1\le r\le k-1$ and $m=(k-1)/2$, we have
\[
m+1\le x\le (k-1)+m=\frac{3k-3}{2},
\]
(with equality possible at the right endpoint), and also $1\le b\le m<k$.
In particular, since $x\le \frac{3k-3}{2}$ and $b\le m=\frac{k-1}{2}$, we have
\[
x+b \le \frac{3k-3}{2}+\frac{k-1}{2}=2k-2<2k,
\]
and clearly $x < x+b < x+k$. Consequently, we have
\[
\left\lfloor\frac{x+b}{k}\right\rfloor-\left\lfloor\frac{x}{k}\right\rfloor\in\{0,1\},
\]
and this difference equals $1$ if and only if $x<k\le x+b$.
Since $x=r+m$, the condition $x<k$ is equivalent to $r\le m$, and the condition
$k\le x+b$ is equivalent to
\[
r \ge k-(m+b)=m+1-b.
\]
Therefore, we have
\[
\left\lfloor\frac{(n+1)b+m}{k}\right\rfloor-\left\lfloor\frac{nb+m}{k}\right\rfloor
=
\mathbf{1}_{\{\,m+1-b\le r\le m\,\}}.
\]

Now, the congruence condition $b_2\equiv nb\pmod{k}$ with $1\le b_2\le m$ is solvable
if and only if $r\le m$, in which case the solution is unique and given by $b_2=r$.
Under this identification, the inequality $b+b_2\ge m+1$ becomes
\[
b+r\ge m+1\quad\Longleftrightarrow\quad r\ge m+1-b.
\]
Thus, for each fixed $b\in\{1,\dots,m\}$, the indicator that there exists a (unique) $b_2$
making $(b,b_2)$ contribute to $N_k(n)$ is exactly
\[
\left\lfloor\frac{(n+1)b+m}{k}\right\rfloor-\left\lfloor\frac{nb+m}{k}\right\rfloor.
\]
Summing over $b=1,\dots,m$ gives the desired conclusion
\[
    N_k(n)
    = \sum_{b=1}^m\left(\left\lfloor\frac{(n+1)b+m}{k}\right\rfloor-\left\lfloor\frac{nb+m}{k}\right\rfloor\right)
    = F_k(n+1)-F_k(n). \qedhere
\]
\end{proof}
\begin{remark}
  \label{rem:gcd_not_needed}
Lemma~\ref{lem:N_as_floor_diff} also turns out to be true even
  without the condition $\gcd(n,k) = 1$.
\end{remark}

\begin{proof}[Proof of Theorem~\ref{thm:conj}]
Fix an odd integer $k$ and set $m=(k-1)/2$. Let $n$ satisfy
$\gcd(n,k)=\gcd(n+1,k)=1$ as in Conjecture~\ref{conj}.
By Lemma~\ref{lem:N_as_floor_diff}, we have
\[
N_k(n)=F_k(n+1)-F_k(n).
\]
Hence, it follows that
\[
N_k(n)\equiv F_k(n+1)+F_k(n)\pmod{2}.
\]

Exactly one of $n$ and $n+1$ is even; let $e\in\{n,n+1\}$ be the even one
and let $o\in\{n,n+1\}$ be the odd one. By assumption,
$\gcd(e,k)=\gcd(o,k)=1$, so Lemma~\ref{lem:Fk} applies to both $e$ and $o$.
Since $o$ is odd, Lemma~\ref{lem:Fk} gives $F_k(o)\equiv 0\pmod{2}$, and therefore
\[
N_k(n)\equiv F_k(e)\pmod{2}.
\]
Since $e$ is even, Lemma~\ref{lem:Fk} also gives
\[
F_k(e)\equiv \left\lfloor\frac{k+1}{4}\right\rfloor \pmod{2}.
\]
Combining the last two congruences yields
\[
N_k(n)\equiv \left\lfloor\frac{k+1}{4}\right\rfloor \pmod{2},
\]
which is exactly Conjecture~\ref{conj}, and  Theorem~\ref{thm:conj} holds.
\end{proof}

\begin{remark}
    After observing the relationship with Jacobi symbols,
    we can also provide a more concise description for $n_k(\mu)$ defined in \eqref{eq:n_k}.
    Let $d_i = \gcd (k, m_i)$ for $i = 1, \ldots, n$, where $\mu = (m_1,\ldots, m_n)$. Then we have
    \begin{equation}
    \label{eq:n_k_Jacobi}
    n_k(\mu)=
    \# \left \{ i \ : \
    \Big(\frac{2}{d_i}\Big) \neq \Big(\frac{2}{k}\Big) \right \}.
    \end{equation}
    To see this, observe that for each prime $q_i$ in the factorization of $k$ and any integer $m$,
    \[ \min\{\nu_{q_i}(m), \nu_{q_i}(k)\} = \nu_{q_i}(\gcd(m, k)). \]
    Therefore, for $d = \gcd(m, k)$, it follows that
    \[ \nu_{\mathcal Q}(m) = \sum_{i=1}^t \nu_{q_i}(d). \]
    Additionally, since $\Big(\frac{2}{p_i}\Big) = 1$ and $\Big(\frac{2}{q_i}\Big) = -1$, we have
    \[ \Big(\frac{2}{d}\Big) = \prod_{i=1}^t \Big(\frac{2}{q_i}\Big)^{\nu_{q_i}(d)} = (-1)^{\sum_{i=1}^t \nu_{q_i}(d)} = (-1)^{\nu_{\mathcal Q}(m)}. \]
    Similarly, we have
    \[ \Big(\frac{2}{k}\Big) = (-1)^{\nu_{\mathcal Q}(k)}. \]
    Hence, the condition
    $\nu_{\mathcal Q}(m_i)\not\equiv \nu_{\mathcal Q}(k) \pmod{2}$ in the original definition of $n_k(\mu)$
    is equivalent to
    \[ \Big(\frac{2}{d_i}\Big) \neq \Big(\frac{2}{k}\Big). \]
\end{remark}


\section*{Appendix: AxiomProver and Lean verification}\label{sec:AI}
AxiomProver, an AI system under development for formal mathematical proof,
was tested using Conjecture~\ref{conj} as an early case target.
The system produced the key reformulation and proof strategy underlying the proof presented in this note.
We stress at the outset that the object formalized in Lean is the
combinatorial identity of Lemma~\ref{lem:N_as_floor_diff} (equivalently, the
number-theoretic content of Conjecture~\ref{conj}); the geometric results on
$k$-differentials, Theorems~\ref{thm:zero} and~\ref{thm:one}, were not formalized.
Since a formalized proof reduces the question of correctness to the question of
whether the formalized statement faithfully captures the intended one, we display
and explain that statement below so that the reader can check it against
Conjecture~\ref{conj} and Lemma~\ref{lem:N_as_floor_diff}.

For transparency, we describe the provenance.
In the discovery phase, we provided AxiomProver only the conjecture statement,
with no references to Chen–Gendron, geometric context, or method hints (like Jacobi symbols).
It returned the Jacobi-symbol reformulation and a reduction to the explicit
floor-sum in Lemma~\ref{lem:N_as_floor_diff}.
This reduction is the key elusive step that had not been noticed earlier.

\subsection*{Process}
We asked AxiomProver to verify Lemma~\ref{lem:N_as_floor_diff} in Lean (see \cite{Lean,Mathlib2020}).
The formal proofs provided in this work were developed and verified using Lean \textbf{4.26.0}.
Compatibility with earlier or later versions is not guaranteed due to the evolving nature of the Lean 4 compiler and its core libraries.
The relevant files are all posted in the following repository:
\begin{center}
  \url{https://github.com/AxiomMath/parity-differential}
\end{center}
The input files were
\begin{itemize}
  \item a \texttt{task.md} containing the natural-language problem statement; and
  \item a configuration file \texttt{.environment} that contains the single line
  \begin{quote}
    \slshape
    lean-4.26.0
  \end{quote}
  which specifies the version of Lean that AxiomProver should use.
\end{itemize}
Given these two input files, AxiomProver autonomously provided the following output files:
\begin{itemize}
  \item \texttt{problem.lean}, a Lean 4.26.0 formalization of the problem statement; and
  \item \texttt{solution.lean}, a complete Lean 4.26.0 formalization of the proof.
\end{itemize}
The repository also contains an ancillary file \texttt{examples.py} that
shows the verification of Lemma~\ref{lem:N_as_floor_diff} for small values of $n$.
This file was written by hand and is unrelated to the formalization process
(in particular, it was not provided to AxiomProver as part of the input).

\subsection*{The formalized statement}
For the reader's convenience we reproduce the formalized statement, so that it
can be checked directly against Lemma~\ref{lem:N_as_floor_diff}. The counting
function and the floor sum are defined as
\begin{quote}\ttfamily\small
def countingFunctionN (k n : \(\mathbb{N}\)) : \(\mathbb{N}\) :=\\
\phantom{xx}let m := (k - 1) / 2\\
\phantom{xx}Finset.card (Finset.filter\\
\phantom{xxxx}(fun p : \(\mathbb{N}\) \(\times\) \(\mathbb{N}\) => m + 1 \(\le\) p.1 + p.2 \(\wedge\) p.2 \% k = (n * p.1) \% k)\\
\phantom{xxxx}(Finset.Icc 1 m \(\times\)\({}^{\text{s}}\) Finset.Icc 1 m))\\[4pt]
def floorSumF (k a : \(\mathbb{N}\)) : \(\mathbb{N}\) :=\\
\phantom{xx}let m := (k - 1) / 2\\
\phantom{xx}\(\sum\) i \(\in\) Finset.Icc 1 m, (a * i + m) / k
\end{quote}
and the main theorem reads
\begin{quote}\ttfamily\small
theorem main\_theorem (k n : \(\mathbb{N}\))\\
\phantom{xx}(hk\_ge : 3 \(\le\) k) (hk\_odd : Odd k)\\
\phantom{xx}(\_hn\_coprime : Nat.Coprime n k) (\_hn1\_coprime : Nat.Coprime (n + 1) k) :\\
\phantom{xx}(countingFunctionN k n : \(\mathbb{Z}\)) = (floorSumF k (n + 1) : \(\mathbb{Z}\)) - (floorSumF k n : \(\mathbb{Z}\))
\end{quote}
Here \texttt{countingFunctionN k n} is exactly $N_k(n)$ and \texttt{floorSumF k a}
is exactly $F_k(a)$, so the statement is precisely
$N_k(n) = F_k(n+1) - F_k(n)$ of Lemma~\ref{lem:N_as_floor_diff}, under the
hypotheses that $k \geq 3$ is odd. Two points of Lean syntax are worth
spelling out for the non-expert reader, as neither is visible from the code alone.
First, although $F_k(a)$ is written with an explicit floor in \eqref{eq:F}, the
Lean code contains no floor symbol: the expression \texttt{(a * i + m) / k} is
division on the natural numbers $\mathbb{N}$, which in Lean and Mathlib is
\emph{defined} to be floor division, so it agrees with $\lfloor (ai+m)/k\rfloor$.
This is why the definition and its supporting lemmas carry names such as
\texttt{floorSumF} and \texttt{floor\_diff\_eq} even though $\lfloor\,\cdot\,\rfloor$
never appears literally. Second, the equality in \texttt{main\_theorem} is stated
over the integers $\mathbb{Z}$ (via the casts \texttt{(\,$\cdot$\, : \(\mathbb{Z}\))}); this is
deliberate, because subtraction on $\mathbb{N}$ is truncated (it cannot return a
negative value), and casting to $\mathbb{Z}$ ensures that $F_k(n+1) - F_k(n)$ is
the honest integer difference. Finally, the two coprimality hypotheses appear
with a leading underscore (\texttt{\_hn\_coprime}, \texttt{\_hn1\_coprime}),
which is Lean's convention for a hypothesis that is declared but never used;
see the remark below.

We verified the artifacts against the declared toolchain (Lean 4.26.0):
\texttt{solution.lean} compiles, \texttt{main\_theorem} is \texttt{sorry}-free, and
its proof depends only on the three standard axioms \texttt{propext},
\texttt{Classical.choice}, and \texttt{Quot.sound}. These checks were carried out
using AXLE (the Axiom Lean Engine), a Lean toolkit that Axiom Math has made
freely available~\cite{AXLE}, which includes its own proof verifier. AXLE
confirmed that \texttt{solution.lean} proves the statement posed in
\texttt{problem.lean}.

After AxiomProver generated a solution, the human authors wrote this paper
(without the use of AI) for human readers. Indeed, a research paper is a narrative designed to communicate ideas to humans, whereas Lean files are designed to satisfy a computer kernel.

For transparency about the process itself, we record the following. The
system was run fully autonomously, Namely, after being given the two input files
described above, it received no human feedback, hints, or intermediate
corrections while producing \texttt{problem.lean} and \texttt{solution.lean}.

\begin{remark}
In the \texttt{task.md} that we provided to AxiomProver,
we included the hypothesis $\gcd(n,k) = \gcd(n+1,k) = 1$.
However, in the resulting \texttt{solution.lean},
AxiomProver correctly identified that this hypothesis is never used
(by prefacing the hypothesis name with an underscore).
So AxiomProver's solution in fact proves the slightly improved version
of Lemma~\ref{lem:N_as_floor_diff} alluded to in Remark~\ref{rem:gcd_not_needed}.
\end{remark}

\subsection*{Further commentary}
We close with some additional perspective on these results.

\medskip
\noindent
(1) \textbf{Scope of Automation.}
    This test case involved research mathematics where the solution path was not self-contained.
    The system identified classical lemmas that enabled the Jacobi-symbolreformulation,
    and it formally verified the combinatorial identity required to complete the proof.

\smallskip
\noindent(2) \textbf{Scope of Formalization.}
    The reader will notice that we formalized the combinatorial core
    (i.e.\ Lemma~\ref{lem:N_as_floor_diff}) but not the number-theoretic reduction
    (i.e.\ Lemma~\ref{lem:Fk}) in Lean.
    This decision was driven by the dependence of Lemma~\ref{lem:Fk}
    on Lemma~\ref{lem:eisenstein} (Eisenstein) and Lemma~\ref{lem:gauss_schering} (Gauss-Schering).
    These two lemmas are already well-established in the literature,
    but would take additional work to formalize in Lean.
    We chose not to demand that the system reprove these standard facts from scratch,
    as the goal of this experiment was to test the discovery of novel proofs, not library building.

\smallskip
\noindent(3) \textbf{Implications.}
    We view this test case as a proof of principle for a research workflow:
    the mathematician poses a conjecture, and the AI can assist with the retrieval,
    reformulation, and formal verification of the components.

\bibliographystyle{plainnat}
\bibliography{biblio}

\end{document}